\newtheorem{Lma}{Lemma}
\newtheorem{Thm}{Theorem}
\newtheorem{Prop}{Proposition}
\newtheorem*{MThm}{Main Theorem}
\title[Level set flow in 3D steady gradient Ricci solitons]{Level set flow in 3D steady gradient Ricci solitons}
\author{Chih-Wei Chen*}
\address{* National Center for Theoretical Sciences, Taiwan}
\email{BabbageTW@gmail.com}
\author{Kuo-Wei Lee**}
\address{** Department of Mathematics National Changhua University of Education, Taiwan}
\email{kwlee@cc.ncue.edu.tw; d93221007@gmail.com}
\keywords{level set flow, steady soliton, gradient Ricci soliton}
\subjclass[2010]{Primary 53C44; Secondary 53C25}
\begin{document}
\maketitle
\begin{abstract}
Let $(M^3, g, f)$ be a nontrivial 3-dimensional steady gradient Ricci soliton.
If the scalar curvature $R$ satisfies
$c_1r^{-b}\leq R\leq c_2r^{-a}$ for some $a\in(0,1], b\geq a$,
and $c_1,c_2>0$, then the umbilical ratio of the level sets of $f$ satisfies $\frac{2|A|^2-H^2}{H^2}\in O(r^{6a-\frac{8a^2}{b}})\cap O(r^{2b-4a})$.
\end{abstract}
\section{Introduction}
The characterization of gradient Ricci soliton is one of the central themes in the study of the Ricci flow with surgery.
For instance, the proof of Poincar\'e conjecture relies on the classification of shrinking solitons
and the asymptotic analysis of ancient solutions.
Indeed, along this line, G. Perelman \cite{Perelman02} said

\begin{quote}
``... {\it I believe that there is only one (up to scaling) noncompact three-dimensional
$\kappa$-noncollapsed ancient solution with bounded positive curvature --- the rotationally symmetric gradient steady soliton,
studied by R.Bryant. In this direction, I have a plausible, but not quite rigorous argument, showing that any such ancient solution can be made eternal,
that is, can be extended for $t\in(-\infty,\infty)$; also I can prove uniqueness in the class of gradient steady solitons.}"
\end{quote}

It is still unknown whether a $3$-dimensional $\kappa$-noncollapsed ancient solution with bounded positive curvature is the Bryant soliton.
However, when restricting to the class of gradient steady solitons,
S. Brendle gave a positive answer to the uniqueness problem \cite{Brendle13}, which Perelman claimed that he had had a proof in mind.
The noncollapsing assumption arises naturally because Perelman showed that the Ricci flow is
$\kappa$-noncollapsed when encountering a finite time singularity {\it somewhere on the manifold}.
An example of which the singularity arises at spatial infinity was constructed by T. Richard and the first author (cf. \cite{ChenThesis}).
Namely, the flow remains regular at the maximal time $T$ and the curvature $|Rm|(x,T)\to\infty$ as $x\to\infty$.
Although the example is $\kappa$-noncollapsed and of type-I, it seems possible to have collapsed singularities in general.
Such singularities might infinitesimally look like any bundle with Hamilton's cigar soliton as its fibers.

The collapsing phenomenon is much less understood not due to the lack of importance, but due to the lack of mechanism.
Many of Perelman's arguments are based on the blow-up method and thus fail on the collapsing case.
Not to mention collapsed singularities, we know very few even for collapsed steady gradient Ricci solitons,
which serve as models of singularities. For instance, the non-existence of a positively curved 3-dimensional
$\kappa$-collapsed steady gradient Ricci soliton is still unsettled.
In this article, we propose the level set flow to study $\kappa$-collapsed steady gradient Ricci solitons.
Namely, we look at the level sets of the potential function $f$ and treat them as a family of evolving hypersurfaces parametrized by the value of $f$.
The evolution of these hypersurfaces, which is confined by a weighted mean curvature flow,
may determine the shape of the ambient manifold --- the steady soliton we would like to investigate.
Our main result is
\begin{MThm}
Let $(M^3, g, f)$ be a nontrivial 3-dimensional steady gradient Ricci soliton.
If the scalar curvature $R$ satisfies
\begin{align}\label{ab}
c_1 r^{-b}(x)\leq R(x)\leq c_2 r^{-a}(x)
\end{align}
for some $a\in(0,1], b\geq a$, $c_1,c_2>0$, and $r(x):=dist(O,x)$ for some fixed $O\in M$,
then the soliton satisfies
\begin{align*}
\frac{2|A|^2-H^2}{H^2}\in O\left(r^{6a-\frac{8a^2}{b}}\right)\cap O\left(r^{2b-4a}\right),
\end{align*}
where $|A|^2$ and $H$ are the norm square of the second fundamental form and the mean curvature of the level sets of $f$, respectively. In particular, when $b\in[a,2a)$, the soliton is asymptotically round in the sense that the umbilical ratio $\frac{2|A|^2-H^2}{H^2}$ vanishes at infinity. 
\end{MThm}

There have been two ways to obtain the asymptotic roundness of three-dimensional steady gradient Ricci solitons.
However, both of them use convergence argument, which needs the $\kappa$-noncollapsity.
One is Perelman's theorem which says that the blow-down limit must be a shrinking soliton,
and thus a cylinder. The other way given by Y. Deng and X. Zhu is based on dimension reduction and the classification of
closed two-dimensional ancient solutions \cite{DengZhu16}.
Precisely, they proved that, when $a=b=1$,
such a steady 3-dimensional gradient Ricci soliton has a uniform lower bound for the volume ratio of remote geodesic balls with linear growth radius.
Thus they are able to show the soliton is (subsequentially) asymptotically
cylindrical, by using Daskalopoulos-Hamilton-\v{S}e\v{s}um's classification of $2$-dimensional closed ancient solutions
\cite{DaskalopoulosHamiltonSesum12}. Combining with Brendle's result,
they concluded that the soliton must be the Bryant soliton. See also \cite{Guo09MA} for some earlier observations in this approach.

When the scalar curvature decays faster than linearly, i.e., $R\in o(r^{-1})$, O. Munteanu,
C.-J. Sung, J. Wang \cite{MunteanuSungWang17} proved that the soliton must be isometric to a quotient of the product manifold of $Cigar$ and the Euclidean space.
Moreover, their result also holds for higher dimensional steady gradient Ricci solitons with nonnegative sectional curvature.
This improves Hamilton's result which says that $R$ cannot decay as fast as $Cr^{-2}$
provided that $Rm\geq 0$ and $Ric>0$ \cite[Theorem 9.44]{ChowLuNi06}.
For more classification results concerning curvature decay, one can consult \cite{Deruelle12,DengZhu15}.

On the other hand, steady gradient Ricci solitons with scalar curvature decaying slowly along some directions,
i.e., $a\in(0,1)$,
are more obscure. Such solitons, if exist, must be $\kappa$-collapsed since H. Guo \cite{Guo09}
showed that $c_1 r^{-1}\leq R\leq c_2 r^{-1}$ is a necessary condition for the noncollapsity.
Moreover, P. Wu \cite{Wu13} showed that, for every steady gradient Ricci soliton, the scalar curvature satisfies $\inf_{B_r}R\leq Cr^{-\frac{1}{2}}$.
Hence our assumption (\ref{ab}) looks reasonable and Main Theorem indeed reveals some facts about the collapsing case.
Hamilton has conjectured the existence of $\kappa$-collapsed steady solitons,
whose scalar curvature might decay slowly in one direction and exponentially in another direction.
For more information about this problem, one can consult a paper of H.-D. Cao and C. He \cite{CaoHe}.
We expect that better understanding of the level set flow could exclude the existence of 3-dimensional $\kappa$-collapsed steady solitons.

We should mention that H. Guo \cite{Guo10} showed that a $3$-dimensional steady
gradient Ricci soliton is rotationally symmetric if the integral of umbilical difference decays very fast, namely,
when $\max_M R$ is normalized to be $1$,
\begin{align*}
\int_{\Sigma(t)}\left(|A|^2-\frac{1}{2}H^2\right)\,\mathrm{d}\sigma\in O(e^{-at})\quad\mbox{for some } a>2,
\end{align*}
where $\Sigma(t):=\{x\in M|f=t\}$. This indeed motivates us to study the decay rate of the
umbilical ratio.

\noindent {\bf{Acknowledgement.}}
The first author would like to thank Prof. Pengfei Guan for helpful discussions during the workshop for Besson's 60th birthday.
The second author is supported by the MOST research grant 103-2115-M-002-013-MY3.

%%%%%%%%%%%%%%%%%%%%%%%%%%%%%%%%%%%%%%%%%%%%%%
\section{Basic facts and known results}
%%%%%%%%%%%%%%%%%%%%%%%%%%%%%%%%%%%%%%%%%%%%%%

A triple $(M^n,g,f)$ consisting of a complete smooth manifold $M$ with dimension $n$,
a Riemannian metric $g$ and a smooth potential function $f: M\to \mathbb{R}$ is called a {\it gradient Ricci soliton} if
\begin{align*}
Ric+\mathrm{Hess}(f)=\mu g
\end{align*}
for some $\mu\in\mathbb{R}$.
We say a gradient Ricci soliton is {\it shrinking} if $\mu>0$; {\it steady} if $\mu=0$; and {\it expanding} if $\mu<0$.

It is well-known that closed steady or expanding gradient Ricci solitons are trivial, that is,
their potential functions $f$ must be constants and manifolds are Einstein manifolds. In the $3$-dimensional case,
shrinking gradient Ricci solitons are completely classified by Perelman \cite{Perelman02}, Ni-Wallach
\cite{NiWallach08}, Naber \cite{Naber10}, Chen \cite{Chen09} and Cao-Chen-Zhu \cite{CaoChenZhu08}.
Such a soliton must be $\kappa$-noncollapsed and is isometric to one of $\mathbb{S}^3$,
$\mathbb{R}\times\mathbb{S}^2$, $\mathbb{R}^3$, or their isometric quotients.
The expanding case is more subtle, even the $3$-dimensional $\kappa$-noncollapsed ones with positive curvature are not yet classified.
In this article, we concentrate on steady gradient Ricci solitons.
Remark that our main theorem holds analogously for $3$-dimensional expanding gradient Ricci solitons with positive Ricci curvature,
but we will not go through any details here.

As mentioned in the introduction, Brendle \cite{Brendle13} showed that Bryant soliton is the only $3$-dimensional
$\kappa$-noncollapsed steady gradient Ricci soliton with positive curvature.
In fact, by Hamilton-Ivey-Chen's pinching theorem, a $3$-dimensional ancient solution always has nonnegative curvature.
By the strong maximum principle, the solution either has positive curvature or splits a line.
Combining all these facts, $3$-dimensional $\kappa$-noncollapsed steady gradient Ricci solitons are completely classified.
So the remaining unknown case is the collapsing one.
Although qualitative description of generic ($\kappa$-collapsed) steady gradient Ricci solitons are very few,
there is a significant fact proved by Munteanu and Wang that a steady gradient Ricci soliton is either one-ended or isometric to $\mathbb{R}\times N$,
where $N$ is compact and Ricci flat \cite{MunteanuWang11}.
On the other hand, in dimension $3$, $Cigar \times\mathbb{R}$ (and its isometric quotient) is the only collapsed example known to us.
Combining these results, it seems still hard to place the bet on the existence or nonexistence of $\kappa$-collapsed steady gradient Ricci soliton.

Here we list some basic facts about steady gradient Ricci solitons. These properties will be used in our argument.
\begin{Lma}
Let $(M^n,g,f)$ be a steady gradient Ricci soliton, then we have the following properties.
\begin{itemize}
\item[\rm(a)] {\rm (Trace of the soliton equation)} $R+\Delta f=0$.
\item[\rm(b)] {\rm (Traced second Bianchi identity)}
$\partial_{\alpha}R=2\nabla^{\beta}R_{\alpha\beta}=2R_{\alpha\beta}f^{\beta}\equiv 2Ric(\nabla f, \partial_{\alpha})$.
\item[\rm(c)] {\rm (Bochner formula)} $\Delta R+2|Ric|^2=\langle\nabla R,\nabla f\rangle$.
\item[\rm(d)] {\rm (Hamilton's identity \cite[page 156]{ChowLuNi06})} $R+|\nabla f|^2=C_0$ for some constant $C_0$.
\item[\rm(e)] {\rm (B.-L. Chen \cite{Chen09})} $R\geq 0$. Moreover, when $n=3$, the sectional curvature is positive except for $Cigar\times\mathbb{R}$,
$\mathbb{R}^3$, and their isometric quotients.
\end{itemize}
\end{Lma}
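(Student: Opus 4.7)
The plan is to derive (a)--(d) as direct algebraic consequences of the soliton equation $\mathrm{Ric}+\mathrm{Hess}(f)=0$, using only the second Bianchi identity and the Ricci commutation formula, and to cite the literature for (e).

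First I would take a metric trace of the soliton equation: since $\mathrm{tr}_g\mathrm{Ric}=R$ and $\mathrm{tr}_g\mathrm{Hess}(f)=\Delta f$, this gives (a). For (b), I would apply $\nabla^{\beta}$ to the soliton equation to obtain
\begin{align*}
\nabla^{\beta}R_{\alpha\beta}+\nabla^{\beta}\nabla_{\alpha}\nabla_{\beta}f=0,
\end{align*}
then commute derivatives via the Ricci identity
\begin{align*}
\nabla^{\beta}\nabla_{\alpha}\nabla_{\beta}f=\nabla_{\alpha}\Delta f+R_{\alpha\beta}\nabla^{\beta}f.
\end{align*}
Substituting (a) to replace $\Delta f$ by $-R$, and the contracted second Bianchi $\nabla^{\beta}R_{\alpha\beta}=\tfrac12\nabla_{\alpha}R$, I can solve to get $\nabla_{\alpha}R=2R_{\alpha\beta}f^{\beta}$, which is (b).

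For (c), I would take the divergence of (b):
\begin{align*}
\Delta R=2\nabla^{\alpha}\bigl(R_{\alpha\beta}f^{\beta}\bigr)=2(\nabla^{\alpha}R_{\alpha\beta})f^{\beta}+2R_{\alpha\beta}\nabla^{\alpha}\nabla^{\beta}f.
\end{align*}
The first term reduces to $\langle\nabla R,\nabla f\rangle$ by contracted Bianchi, and the second becomes $-2|\mathrm{Ric}|^{2}$ after substituting $\nabla^{\alpha}\nabla^{\beta}f=-R^{\alpha\beta}$ from the soliton equation, which proves (c). For (d), I would directly differentiate: $\nabla_{\alpha}(R+|\nabla f|^{2})=\nabla_{\alpha}R+2f^{\beta}\nabla_{\alpha}\nabla_{\beta}f=2R_{\alpha\beta}f^{\beta}-2R_{\alpha\beta}f^{\beta}=0$, using (b) and the soliton equation; connectedness of $M$ then yields the constant $C_{0}$.

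Statement (e) is the only genuinely nontrivial item: $R\geq 0$ for any complete ancient (in particular, steady soliton) solution is B.-L. Chen's theorem, proved by applying the strong maximum principle to the evolution equation $\partial_{t}R=\Delta R+2|\mathrm{Ric}|^{2}$ after a careful argument to avoid loss of minima at infinity. In dimension $3$, the rigidity assertion follows by applying Hamilton's strong maximum principle to the curvature operator, which forces either strict positivity of the sectional curvature or a local splitting; the splitting case together with the classification of two-dimensional steady solitons (the cigar) yields the listed quotients. I would simply cite \cite{Chen09} and Hamilton's maximum principle for (e). The only delicate point in the elementary items is bookkeeping the commutator and Bianchi applications in (b) and (c); everything else is algebra.
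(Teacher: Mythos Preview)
Your derivations of (a)--(d) are correct and standard, and your handling of (e) by citation is appropriate. Note, however, that the paper does not actually prove this lemma: it is presented there as a list of well-known facts, with (d) and (e) attributed to the literature and (a)--(c) left without argument. So there is no ``paper's own proof'' to compare against; you have simply supplied the routine verifications that the authors chose to omit.
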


%%%%%%%%%%%%%%%%%%%%%%%%%%%%%%%%%%%%%%%%%%%%%%%%%%%%%%
\section{Asymptotically umbilical}
%%%%%%%%%%%%%%%%%%%%%%%%%%%%%%%%%%%%%%%%%%%%%%%%%%%%%%
In the following sections, we will concentrate on the $3$-dimensional steady gradient Ricci soliton.

Let $(M^3,g,f)$ be a steady gradient Ricci soliton with positive sectional curvature.
Suppose that the scalar curvature $R$ satisfies the asymptotic behavior $c_1 r^{-b}\leq R\leq c_2 r^{-a}$ for some positive constants $a$ and $b$.
Since $f$ is strictly concave, we know that $f$ achieves its maximum value at some point $O$.
It implies $R(O)=\max_M R=C_0$ and $M$ is diffeomorphic to $\mathbb{R}^3$.
Without loss of generality, we may assume $f(O)=0$.
Moreover, each level set of $f$, denoted by $\Sigma_t:=\{f=t\}$, is diffeomorphic to $\mathbb{S}^2$ by Morse theory.
This structure leads us to consider the level set flow $F:\mathbb{S}^2\times(-\infty,0)\to M$ such that $F(\cdot,t)=\Sigma_t$,
that is,
\begin{align}
\left\{
\begin{array}{l}
\displaystyle\frac{\partial}{\partial t}F(x,t)=\frac{1}{|\nabla f|}\nu=\frac{1}{R-R_{\nu\nu}}H\nu \\[4mm]
F(\cdot,t_0)=\Sigma_{t_0}
\end{array}
\right., \label{LSFeqn}
\end{align}
where $\nu=\frac{\nabla f}{|\nabla f|}$ is the unit inward normal vector, and $R_{\nu\nu}=Ric(\nu,\nu)$.
Let $\lambda=\frac{1}{R-R_{\nu\nu}}$.
By viewing this level set flow as the $\lambda$-weighted mean curvature flow,
we may borrow computations from Huisken's paper \cite{Huisken86} on the mean curvature flow.
Here we list evolution equations of the geometric quantities under the level set flow (\ref{LSFeqn}).
\begin{Prop} \label{evoh}
Choosing local coordinates $\{x^1,x^2\}$ on $\Sigma_t$ such that
$\{t=f, x^1, x^2\}$ forms a local coordinate chart on $M^3$ and denoting $\partial_t=\frac1{|\nabla f|}\nu$,
we have the following equations.
\begin{itemize}
\item[\rm(a)] The evolution equation of the second fundamental form $h_{ij}$ of $\Sigma_t$ in $M^3$ is
\begin{align*}
\frac{\partial}{\partial t} h_{ij}
&=\lambda\left(
\begin{array}{l}
\Delta h_{ij}-2Hh_{il}h^l_{\ j}+ h_{ij}(|A|^2+R_{\nu\nu}) \\[2mm]
-h_{jl}R^{l\ \ \ m}_{\ mi}-h_{il}R^{l\ \ \ m}_{\ mj}+2h_{lm}R^{l\ m}_{\ i\ j}+\nabla_j R_{\nu li}^{\ \ \ l}+\nabla_l R^{\ \ \ l}_{\nu ij}
\end{array}\right) \\
&\quad+H\lambda_{,ij}+H_{,i}\lambda_{,j}+H_{,j}\lambda_{,i}.
\end{align*}
In particular, the mean curvature $H$ of $\Sigma_t$ in $M^3$ satisfies
\begin{align*}
\frac{\partial}{\partial t}H=\lambda\left(\Delta H+H(|A|^2+R_{\nu\nu})\right)+H\Delta\lambda+2\langle\nabla\lambda,\nabla H\rangle.
\end{align*}
\item[\rm(b)] The evolution equation of the norm square of the second fundamental form $|A|^2$ of $\Sigma_t$ in $M^3$ is
\begin{align*}
\hspace*{12mm}\frac{\partial}{\partial t}|A|^2
=\lambda(\Delta |A|^2-2|\nabla A|^2+2|A|^2(|A|^2+R_{\nu\nu})-B)+2h^{ij}\left( H\lambda_{,ij}+2H_{,i}\lambda_{,j}\right),
\end{align*}
where $B=4h^{ij}h_{jl}R_{\ mi\ }^{l\ \ \ m} - 4 h^{ij}h^{lm}R_{iljm}-2h^{ij}(\nabla_j R_{\nu li}^{\ \ \ l}+\nabla_l R_{\nu ij}^{\ \ \ l})$.
\end{itemize}
\end{Prop}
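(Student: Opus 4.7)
The plan is to derive Proposition 1 by the same direct computation that Huisken performed for the mean curvature flow in \cite{Huisken86}, now adapted to the weighted speed $\phi:=\lambda H$ on a curved background $M^3$. First I would record the standard variational formulas that hold for any flow of hypersurfaces with normal velocity $F_t=\phi\nu$ in a Riemannian manifold: $\partial_t g_{ij}=2\phi h_{ij}$, $\partial_t g^{ij}=-2\phi h^{ij}$, $\partial_t\nu=-\nabla\phi$ (the minus the tangential gradient of $\phi$ along $\Sigma_t$), and
\begin{align*}
\partial_t h_{ij}=\nabla_i\nabla_j\phi-\phi\,h_{ik}h^k_{\ j}+\phi\,\overline{R}_{\nu i\nu j},
\end{align*}
where $\overline{R}$ denotes the ambient Riemann tensor. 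These are obtained by differentiating $h_{ij}=-\langle\nabla_{\partial_i}\partial_j,\nu\rangle$ in $t$ and tracking which terms come from $\partial_t\nu$ versus from $\partial_t\partial_j F$.

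Next I would substitute $\phi=\lambda H$ and expand via the Leibniz rule to get
\begin{align*}
\nabla_i\nabla_j(\lambda H)=\lambda\,\nabla_i\nabla_j H+H\,\lambda_{,ij}+H_{,i}\lambda_{,j}+H_{,j}\lambda_{,i},
\end{align*}
which already isolates the three $\lambda$-derivative terms that appear at the end of (a). The remaining job is to turn $\lambda\,\nabla_i\nabla_j H$ into $\lambda\,\Delta h_{ij}$ plus the advertised curvature corrections. This is the Simons-type identity in a curved ambient: starting from Codazzi $\nabla_k h_{ij}-\nabla_i h_{kj}=\overline{R}_{\nu kij}$ (up to sign convention), I would take one more covariant derivative, trace against $g^{kl}$, commute $\nabla_k\nabla_i$ on the $2$-tensor $h_{lj}$ picking up intrinsic Riemann of $\Sigma_t$, and then use the Gauss equation $R^{\Sigma}_{kilm}=\overline{R}_{kilm}+h_{km}h_{il}-h_{kl}h_{im}$ to convert intrinsic curvature back into $\overline{R}$ plus quadratic terms in $h$. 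The undifferentiated $\overline{R}$-contractions recombine with the $\phi\,\overline{R}_{\nu i\nu j}$ term from the variational formula to produce the $h_{ij}R_{\nu\nu}$, the $-h_{jl}R^{l\ \ m}_{\ mi}-h_{il}R^{l\ \ m}_{\ mj}+2h_{lm}R^{l\ m}_{\ i\ j}$ block, while the $\nabla\overline{R}$ terms coming out of differentiating Codazzi give $\nabla_j R_{\nu li}^{\ \ \ l}+\nabla_l R^{\ \ \ l}_{\nu ij}$. Combining this with the signs of the $-\phi h_{ik}h^k_{\ j}$ term produces the $-2Hh_{il}h^l_{\ j}+h_{ij}|A|^2$ piece.

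Tracing by $g^{ij}$ then yields (a), second equation, for $H$: the $\nabla_i\nabla_j$-of-curvature terms collapse by the second Bianchi identity into terms that cancel against the traces of the Riemann block, leaving only $\lambda(\Delta H+H(|A|^2+R_{\nu\nu}))+H\Delta\lambda+2\langle\nabla\lambda,\nabla H\rangle$; note that the $\partial_t g^{ij}\cdot h_{ij}$ contribution is absorbed here against one of the quadratic terms. For (b), I would compute
\begin{align*}
\partial_t|A|^2=2h^{ij}\partial_t h_{ij}+2(\partial_t g^{ij})h_i{}^k h_{kj},
\end{align*}
substitute the formula from (a), and use $2h^{ij}\Delta h_{ij}=\Delta|A|^2-2|\nabla A|^2$ to produce the dissipation term. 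All the $\lambda$-derivative contributions repackage as $2h^{ij}(H\lambda_{,ij}+2H_{,i}\lambda_{,j})$, and the curvature contractions collect into the tensor $B$ stated in the proposition.

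The real obstacle is purely bookkeeping: carefully tracking the Riemann tensor contractions that come from (i) commuting two covariant derivatives on a $(0,2)$-tensor on $\Sigma_t$, (ii) applying the Gauss equation, and (iii) the $\nabla\overline{R}$ terms generated by differentiating Codazzi. Since the ambient is only three-dimensional and no special simplification (e.g. $\overline{R}_{ijkl}$ determined by Ricci) is used at this stage, these terms must be kept in full generality, which is the source of the unwieldy appearance of the $B$ tensor; the soliton identities in Lemma 1 are not yet invoked here and will enter only in subsequent sections.
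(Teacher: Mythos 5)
Your proposal is correct and follows essentially the same route as the paper, which itself gives no computation for Proposition~\ref{evoh} but explicitly borrows Huisken's evolution equations for mean curvature flow in a Riemannian ambient space; your adaptation (general normal-variation formula for $h_{ij}$, Leibniz expansion of $\nabla_i\nabla_j(\lambda H)$ to isolate the $H\lambda_{,ij}+H_{,i}\lambda_{,j}+H_{,j}\lambda_{,i}$ terms, and the Simons identity via Codazzi and Gauss to convert $\nabla_i\nabla_j H$ into $\Delta h_{ij}$ plus the ambient curvature block) is exactly the intended derivation. The traced equation for $H$ and the computation of $\partial_t|A|^2$ then follow by the bookkeeping you describe, matching Huisken's Corollary with the extra $\lambda$-derivative terms.
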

We define the {\it umbilical ratio} $U_\sigma=\frac{2|A|^2-H^2}{H^{2+\sigma}}$ for some constant $\sigma\in\mathbb{R}$
and derive the evolution equation of the umbilical ratio $U_\sigma$ as follows.
\begin{Prop} \label{prop2}
The evolution equation of the umbilical ratio $U_\sigma$ is
\begin{align*}
\frac{\partial}{\partial t} U_\sigma
&=\lambda\left(\Delta U_{\sigma}+\frac{2(1+\sigma)}{H}\langle \nabla H, \nabla U_{\sigma}\rangle
 -\frac{2}{H^{4+\sigma}}|\nabla_iHh_{jk}-H\nabla_ih_{jk}|^2\right)\\
&\quad+\lambda\left(\frac{\sigma(1+\sigma)}{H^2}|\nabla H|^2-\sigma(|A|^2+R_{\nu\nu})\right)U_{\sigma} \\
&\quad-\frac{\lambda}{H^{2+\sigma}}B-(2+\sigma)\left(\Delta\lambda+\frac{2}{H}\langle\nabla H,\nabla\lambda\rangle\right)U_{\sigma}+\frac{2}{H^{2+\sigma}}D,
\end{align*}
where
\begin{align*}
B&=4h^{ij}h_{jl}R_{\ mi\ }^{l\ \ \ m}-4 h^{ij}h^{lm}R_{iljm}-2h^{ij}(\nabla_j R_{\nu li}^{\ \ \ l}+\nabla_l R_{\nu ij}^{\ \ \ l}),\quad\mbox{and} \\
D&=h^{ij}\left(H\lambda_{,ij}+2H_{,i}\lambda_{,j}\right)-\frac{1}{2}H(H\Delta\lambda+2\langle\nabla H,\nabla\lambda\rangle).
\end{align*}
\end{Prop}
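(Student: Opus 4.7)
The plan is to differentiate $U_\sigma = (2|A|^2 - H^2)\,H^{-(2+\sigma)}$ directly via the product rule, substitute the two evolution equations from Proposition~\ref{evoh}, and reorganise the resulting second-order operator into $\Delta U_\sigma$ plus the explicit lower-order terms in the statement.

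Writing
\begin{align*}
\partial_t U_\sigma = \frac{\partial_t(2|A|^2 - H^2)}{H^{2+\sigma}} - (2+\sigma)\,U_\sigma\,\frac{\partial_t H}{H}
\end{align*}
and plugging in Proposition~\ref{evoh}, the second-derivative contribution rearranges via the Simons-type identity $2\Delta|A|^2 - 2H\Delta H = \Delta(2|A|^2 - H^2) + 2|\nabla H|^2$. The reaction terms involving $|A|^2+R_{\nu\nu}$ combine with coefficients $2$ and $-(2+\sigma)$ to give $-\sigma\lambda(|A|^2+R_{\nu\nu})U_\sigma$; the Riemann-curvature contributions group into $-\lambda B/H^{2+\sigma}$ by the very definition of $B$; and the $\lambda$-derivative corrections collect, by the definition of $D$, into the $D$-term together with the separate factor $-(2+\sigma)(\Delta\lambda + \tfrac{2}{H}\langle\nabla H,\nabla\lambda\rangle)U_\sigma$. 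Converting $H^{-(2+\sigma)}\Delta(2|A|^2-H^2)$ into $\Delta U_\sigma$ via the standard Laplacian-of-a-quotient formula, and rewriting the cross terms through $\nabla(2|A|^2-H^2) = H^{2+\sigma}\nabla U_\sigma + (2+\sigma)H^{1+\sigma}U_\sigma\nabla H$, produces $\Delta U_\sigma$, a gradient-coupling $\tfrac{1}{H}\langle\nabla U_\sigma,\nabla H\rangle$, a $\tfrac{1}{H^2}|\nabla H|^2\,U_\sigma$ term (with $\sigma$-dependent coefficients still to be fixed), and a residual $\tfrac{(2+\sigma)\Delta H}{H}U_\sigma$ that cancels exactly against the analogous piece of $-(2+\sigma)U_\sigma\partial_tH/H$ above --- a useful internal consistency check.

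The main obstacle is absorbing the leftover $|\nabla A|^2$ contribution into the perfect Kato-type square in the statement. The key algebraic identity
\begin{align*}
|\nabla_i H\,h_{jk} - H\,\nabla_i h_{jk}|^2 = |\nabla H|^2|A|^2 - H\,\langle\nabla H,\nabla|A|^2\rangle + H^2|\nabla A|^2
\end{align*}
does the absorption. Expressing $|A|^2$ and $\nabla|A|^2$ through $V := 2|A|^2 - H^2$, hence through $U_\sigma$ and $\nabla U_\sigma = H^{-(2+\sigma)}\nabla V - (2+\sigma)H^{-1}U_\sigma\nabla H$, the Kato square carries built-in corrections of exactly the form required to merge with the residues of the previous step: the gradient-coupling coefficient collapses to $2(1+\sigma)$ and the $|\nabla H|^2 U_\sigma/H^2$ coefficient simplifies to $\sigma(1+\sigma)$, as advertised. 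The only real work is the careful bookkeeping of powers of $H$ and of the $(2+\sigma)$, $(1+\sigma)$, $\sigma$ arithmetic, which is routine once each intermediate quantity is organised as above.
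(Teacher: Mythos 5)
Your route --- differentiate $U_\sigma=(2|A|^2-H^2)H^{-(2+\sigma)}$ by the product rule, substitute the two equations of Proposition~\ref{evoh}, and reorganise in Huisken's manner --- is the only sensible one and is evidently what the authors intend, since the paper states Proposition~\ref{prop2} with no proof at all. Your key ingredients check out: the identity $2\Delta|A|^2-2H\Delta H=\Delta(2|A|^2-H^2)+2|\nabla H|^2$ (this is just the product rule for the Laplacian, not really a Simons identity, but it is correct), the expansion $|\nabla_iH\,h_{jk}-H\nabla_ih_{jk}|^2=|\nabla H|^2|A|^2-H\langle\nabla H,\nabla|A|^2\rangle+H^2|\nabla A|^2$, the $2-(2+\sigma)=-\sigma$ arithmetic for the reaction term, the cancellation of the $\frac{(2+\sigma)\Delta H}{H}U_\sigma$ pieces, and the final coefficients $2(1+\sigma)$ and $\sigma(1+\sigma)$ on the homogeneous gradient terms.

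There is, however, one concrete point where the endpoint you assert does not match what your own computation will produce. With $V=2|A|^2-H^2$ one has $\partial_tV=2\,\partial_t|A|^2-2H\,\partial_tH$, so the inhomogeneous contributions come out as $-\frac{2\lambda B}{H^{2+\sigma}}$, $+\frac{4D}{H^{2+\sigma}}$, and a remainder $\lambda\frac{2|\nabla H|^2-4|\nabla A|^2}{H^{2+\sigma}}$ that is absorbed only by $-\frac{4}{H^{4+\sigma}}|\nabla_iH\,h_{jk}-H\nabla_ih_{jk}|^2$ --- i.e.\ exactly twice each of the coefficients printed in the Proposition. (The terms proportional to $U_\sigma$ or $\nabla U_\sigma$ are invariant under rescaling the numerator and are unaffected; one can check that with the doubled Kato-square coefficient the gradient couplings still collapse to $2(1+\sigma)$ and $\sigma(1+\sigma)$.) The formula as printed is, verbatim, the evolution equation of $\frac{|A|^2-\frac12H^2}{H^{2+\sigma}}=\frac12U_\sigma$, so Proposition~\ref{prop2} is internally inconsistent with the stated definition of $U_\sigma$ and with Proposition~\ref{evoh} by this factor of $2$ in the $B$, $D$, and Kato-square terms. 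This is harmless for the later $O(\cdot)$ estimates, but your write-up, as planned, would assert that the bookkeeping ``collapses'' to the printed coefficients when it in fact does not; you should either prove the corrected identity or renormalise $U_\sigma$ and say so explicitly.
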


Next two Lemmas we will transform the terms $B$ and $D$ into the quantities $\frac{\partial}{\partial t}U_{\sigma},U_{\sigma}$, or $\sqrt{U_{\sigma}}$
so that we can analyze the behavior of the umbilical ratio through the evolution equation.

\begin{Lma} \label{B}
Suppose that $p$ is a non-umbilical point of $\Sigma_t$, then in a neighborhood of $p\in M^3$, we have
\begin{align*}
-\frac{1}{H^{2+\sigma}}B
=|\nabla f|^2\frac{\partial}{\partial t}U_{\sigma}
-\left(2C_0-\frac{2+\sigma}{H}\langle\nabla H,\nabla f\rangle-2H|\nabla f|\right)U_{\sigma}
-\frac{8L_{22}}{H^{\frac{2+\sigma}{2}}|\nabla f|^{3}}\sqrt{U_\sigma},
\end{align*}
where $L(\cdot,\cdot):=2\mathrm{d}R\otimes\mathrm{d}R-|\nabla R|^2g$,
$L_{22}=L(e_2,e_2)$,
$e_2$ is the unit eigenvector of the second fundamental form $h_{ij}$
with respect to the larger eigenvalue,
and $C_0$ is a constant satisfying $R+|\nabla f|^2=C_0$.
\end{Lma}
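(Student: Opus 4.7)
The plan is to compute $B$ directly, exploiting two identities special to 3-dimensional steady gradient Ricci solitons: on tangent directions to $\Sigma_t$ the soliton equation gives $R_{ij} = |\nabla f|\,h_{ij}$, so the restriction of Ricci to the tangent plane is diagonal precisely when $h$ is; and in dimension three the full Riemann tensor is algebraically determined by Ricci,
\begin{align*}
R_{ijkl} = g_{ik}R_{jl} + g_{jl}R_{ik} - g_{il}R_{jk} - g_{jk}R_{il} - \tfrac{R}{2}\bigl(g_{ik}g_{jl} - g_{il}g_{jk}\bigr).
\end{align*}
Together these rewrite every piece of $B$ in terms of $h_{ij}$, $|\nabla f|$, $R_{\nu\nu}$, and the first two covariant derivatives of $R$. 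Since $\partial_t = \nabla_\nu/|\nabla f|$ on scalars, $|\nabla f|^2\partial_t U_\sigma = \langle\nabla U_\sigma,\nabla f\rangle$, which is how the $\partial_t U_\sigma$ term on the right-hand side will appear.

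I fix a non-umbilical $p\in\Sigma_t$ and work in a local orthonormal frame $\{e_1,e_2,\nu\}$ with $h$ diagonal at $p$ and $\lambda_2>\lambda_1$; then $\sqrt{U_\sigma} = (\lambda_2-\lambda_1)/H^{(2+\sigma)/2}$. Substituting the 3D identity and diagonalizing, the algebraic (Riemann-quadratic) pieces $4h^{ij}h_{jl}R^{l\ \ \ m}_{\ mi} - 4h^{ij}h^{lm}R_{iljm}$ collapse to $4R_{1212}(\lambda_2-\lambda_1)^2$, and a second application of the 3D formula together with $R_{11}+R_{22}=|\nabla f|H$ gives $R_{1212} = (|\nabla f|H - R_{\nu\nu})/2$. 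So the algebraic part contributes $-2(|\nabla f|H - R_{\nu\nu})U_\sigma$ to $-B/H^{2+\sigma}$; together with the derivative contribution below and Hamilton's identity $C_0 = R+|\nabla f|^2$ from Lemma 1(d), this accounts for the $-(2C_0 - 2H|\nabla f|)U_\sigma = -2(|\nabla f|^2+R_{\nu\nu})U_\sigma$ portion of the target.

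For the derivative pieces, algebraic symmetries reduce $R_{\nu li}^{\ \ \ l}$ to the Ricci entry $R_{i\nu}$, and the contracted second Bianchi identity yields $\nabla_l R_{\nu ij}^{\ \ \ l} = \nabla_i R_{\nu j} - \nabla_\nu R_{ij}$; so the derivative part of $B$ collapses to $-4h^{ij}\nabla_j R_{i\nu} + 2h^{ij}\nabla_\nu R_{ij}$. Using the traced Bianchi $R_{\alpha\nu} = \nabla_\alpha R/(2|\nabla f|)$ from Lemma 1(b) together with $\nabla|\nabla f|^2 = -\nabla R$, the first sum splits into a Hessian-of-$R$ piece and a quadratic-in-$\nabla R$ piece. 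In the diagonal frame the quadratic piece reads
\begin{align*}
\lambda_1\bigl(dR(e_1)\bigr)^2 + \lambda_2\bigl(dR(e_2)\bigr)^2 = \frac{H}{2}\bigl(|\nabla R|^2 - (dR(\nu))^2\bigr) + \frac{\lambda_2-\lambda_1}{2}\bigl((dR(e_2))^2 - (dR(e_1))^2\bigr),
\end{align*}
and by definition of $L$, $(dR(e_2))^2-(dR(e_1))^2 = L_{22}+(dR(\nu))^2$, so the asymmetric part is $\frac{\lambda_2-\lambda_1}{2}L_{22}$; tracking constants through $dR(\nu) = 2|\nabla f|R_{\nu\nu}$ produces the claimed $-\frac{8L_{22}}{H^{(2+\sigma)/2}|\nabla f|^3}\sqrt{U_\sigma}$ term. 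The remaining Hessian pieces, combined with $2h^{ij}\nabla_\nu R_{ij} = 2h^{ij}\nabla_\nu(|\nabla f|h_{ij})$, reassemble via $\nabla_\nu=|\nabla f|\partial_t$ applied to $H$ and $|A|^2$, and via the expansion $\nabla U_\sigma = \nabla(2|A|^2-H^2)/H^{2+\sigma} - (2+\sigma)U_\sigma\nabla H/H$, into exactly $|\nabla f|^2\partial_t U_\sigma - \frac{2+\sigma}{H}\langle\nabla H,\nabla f\rangle U_\sigma$; the latter term, when combined with the algebraic $U_\sigma$-contribution, produces the stated coefficient $-(2C_0 - \frac{2+\sigma}{H}\langle\nabla H,\nabla f\rangle - 2H|\nabla f|)U_\sigma$.

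The main obstacle is this last reassembly: aligning the Hessian-of-$R$ residues from $-4h^{ij}\nabla_j R_{i\nu}$ with the $\nu$-derivative of $R_{ij}=|\nabla f|h_{ij}$ so that the combination reduces exactly to $|\nabla f|^2\partial_t U_\sigma$, with no extra remainder. Moreover, the diagonalizing frame $\{e_1,e_2\}$ is only smooth near a non-umbilical point (which is the Lemma's standing hypothesis), so one must verify that Christoffels of the chosen frame contribute no spurious terms --- most cleanly by doing the bulk of the computation in an arbitrary frame and only diagonalizing at the end to identify $L_{22}$.
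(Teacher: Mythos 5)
Your outline follows the same strategy as the paper's proof: fix a non-umbilical point, diagonalize $h$, observe that the Riemann-quadratic part of $B$ collapses to $4R_{1212}S^2$ with $S=h_{22}-h_{11}$, and convert the $\nabla Rm$ part through the soliton and Bianchi identities into a $\nu$-derivative of $S^2$ plus $U_\sigma$- and $\sqrt{U_\sigma}$-terms, with $L_{22}$ emerging from the $(dR(e_i))^2$ contributions. But the proposal stops exactly where the lemma is actually proved. You yourself flag as ``the main obstacle'' the verification that the Hessian-of-$R$ residues from $-4h^{ij}\nabla_jR_{i\nu}$ and the term $2h^{ij}\nabla_\nu R_{ij}$ reassemble into $|\nabla f|^2\frac{\partial}{\partial t}U_\sigma-\frac{2+\sigma}{H}\langle\nabla H,\nabla f\rangle U_\sigma$ with no remainder, and you do not carry it out. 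That exact cancellation is the entire content of the lemma --- the $\partial_t U_\sigma$ term is what makes the $\Theta$-manipulation in Theorem~\ref{theorem1} possible --- so leaving it unverified is a genuine gap rather than a routine detail. The paper closes it by a more direct route: it writes the derivative part as $2S(\nabla_2R_{\nu2}-\nabla_1R_{\nu1})$, commutes derivatives via the soliton equation to get $\nabla_\nu R_{22}-\nabla_\nu R_{11}-|\nabla f|(R_{22}-R_{11})$, then uses $R_{22}-R_{11}=S|\nabla f|$ to write $\nabla_\nu R_{22}-\nabla_\nu R_{11}=\nu(S|\nabla f|)-2Ric(\nabla_\nu e_2,e_2)+2Ric(\nabla_\nu e_1,e_1)$; the $\nu(S|\nabla f|)$ piece produces $|\nabla f|^2\partial_0 S^2$ (hence $\partial_t U_\sigma$ and the $\frac{2+\sigma}{H}\langle\nabla H,\nabla f\rangle U_\sigma$ correction), while the explicit formula $\nabla_\nu e_i=-h_{ii}e_i+\frac{e_i(R)}{2|\nabla f|}\partial_0$ turns the $Ric(\nabla_\nu e_i,e_i)$ corrections into $HS|\nabla f|$ plus the $(e_i(R))^2$ combination that becomes the $L_{22}$ term. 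If you keep your Bianchi-identity route, you must still perform the analogous bookkeeping for $(\nabla_\nu Ric)(e_i,e_i)=\nu(R_{ii})-2Ric(\nabla_\nu e_i,e_i)$, which lands you in the same computation; asserting that things ``reassemble into exactly'' the right expression is precisely what needs proof.

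A second, concrete issue: by your own accounting, $(dR(e_2))^2-(dR(e_1))^2=L_{22}+(dR(\nu))^2$, so your quadratic-in-$\nabla R$ piece carries an extra $\frac{h_{22}-h_{11}}{2}(dR(\nu))^2=2(h_{22}-h_{11})|\nabla f|^2R_{\nu\nu}^2$ beyond the stated $L_{22}$ term, and ``tracking constants through $dR(\nu)=2|\nabla f|R_{\nu\nu}$'' does not make it disappear. (The paper's computation identifies $(e_2(R))^2-(e_1(R))^2$ with $L_{22}$, i.e., it effectively reads $|\nabla R|^2$ in the definition of $L$ as the tangential gradient; you must either adopt that reading explicitly or show where your $(dR(\nu))^2$ residue cancels against another term.) Until the reassembly and this residue are settled, what you have is an accurate plan of attack, not a proof.
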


\begin{proof}
Given $p\in\Sigma_t$,
we can choose local coordinates $\{x^0=f,x^1,x^2\}$ around $p\in M^3$ such that
$\{x^1,x^2\}$ forms normal coordinates around $p\in \Sigma_t$, and $e_1:=\partial_1 ,e_2:=\partial_2$
are eigenvectors of the second fundamental form $h_{ij}$ at $p$ in this local chart (This could be done only when $x^0$ is chosen to be $f$).
This coordinates system is called {\it subnormal} in this paper.
Since $p$ is not an umbilical point, we assume $h_{22}>h_{11}$ in convenience.
In this setting, we have $g^{00}=|\nabla f|^2$, $\nabla f=|\nabla f|^2\frac{\partial}{\partial x^0}$, and $\nu=|\nabla f|\partial_0$.

Let $S^2=2|A|^2-H^2$ be the numerator part of the umbilical ratio $U_\sigma$.
First, we can compute that the first two terms of $B$ will be $4R_{1212}S^2$.
This is because in the subnormal coordinates, we have
\begin{align*}
&\quad\ 4h^{ij}h_{jl}R_{\ mi\ }^{l\ \ \ m}-4 h^{ij}h^{lm}R_{iljm} \\
&=4h_{22}h_{22}R_{2121}+4h_{11}h_{11}R_{1212}
-4h_{11}h_{22}R_{1212}-4h_{22}h_{11}R_{2121} \\
&=4R_{1212}(h_{22}^2+h_{11}^2-2h_{11}h_{22})=4R_{1212}S^2.
\end{align*}
Next, we will compute the rest two terms of $B$ (without minus sign):
\begin{align*}
&\quad\ 2h^{ij}\left(\nabla_j R_{\nu li}^{\ \ \ l}+\nabla_l R_{\nu ij}^{\ \ \ l}\right)\\
&=2\left(h_{11}\nabla_1R_{\nu 212}+h_{22}\nabla_2R_{\nu 121}+h_{11}\nabla_2R_{\nu 112}+h_{22}\nabla_1R_{\nu 221}\right)\\
&=2\left(h_{22}-h_{11}\right)\left(\nabla_2R_{\nu 121}-\nabla_1R_{\nu 212} \right)
=2\left(h_{22}-h_{11}\right)(\nabla_2R_{\nu 2}-\nabla_1R_{\nu 1}).
\end{align*}
Using the Ricci soliton equation, we interchange the orders of derivatives and get
\begin{align*}
&\quad\ \nabla_2R_{\nu 2}-\nabla_1R_{\nu 1} \\
&=-(\nabla_{2}\mathrm{Hess}(f)(\nu,e_2))+(\nabla_{1}\mathrm{Hess}(f)(\nu, e_1)) \\
&=-(\nabla_\nu\mathrm{Hess}(f)(e_2,e_2))-\left\langle R(\nu,e_2)e_2,\nabla f\right\rangle
+(\nabla_\nu\mathrm{Hess}(f)(e_1, e_1))+\left\langle R(\nu,e_1)e_1,\nabla f\right\rangle \\
&=\nabla_\nu R_{22}+R_{2\nu \nu 2}|\nabla f|-\nabla_\nu R_{11}-R_{1\nu \nu 1}|\nabla f| \\
&=\nabla_\nu R_{22}-\nabla_\nu R_{11}+|\nabla f|(R_{2\nu \nu 2}+R_{2112}-R_{2112}-R_{1\nu \nu 1}) \\
&=\nabla_\nu R_{22}-\nabla_\nu R_{11}-|\nabla f|(R_{22}-R_{11}).
\end{align*}
On a neighborhood of a non-umbilical point $p\in M^3$, by the soliton equation, we have $S=h_{22}-h_{11}=\frac{R_{22}-R_{11}}{|\nabla f|}$,
which implies $R_{22}-R_{11}=S|\nabla f|$, and
\begin{align*}
\nu(S|\nabla f|)=\nu(R_{22}-R_{11})
=\nabla_{\nu}R_{22}+2Ric(\nabla_{\nu}e_2,e_2)-\nabla_{\nu}R_{11}-2Ric(\nabla_{\nu}e_1,e_1).
\end{align*}
Up to now, we get the following equality:
\begin{align}
&\quad\ 2h^{ij}\left(\nabla_j R_{\nu li}^{\ \ \ l}+\nabla_l R_{\nu ij}^{\ \ \ l}\right) \notag
=2S(\nabla_2 R_{\nu 2} -\nabla_1R_{\nu 2})\\
&=2S\left(\nu(S|\nabla f|)-2Ric(\nabla_{\nu}e_2,e_2)+2Ric(\nabla_{\nu}e_1,e_1)-S|\nabla f|^2\right). \label{Equation001}
\end{align}

Next, for $i=1,2$, since $\nabla_{\nu}e_i-\nabla_{e_i}\nu=[\nu,e_i]$ and $\nabla_{e_i}\nu=-h_{ii}e_i$, we have at $p$,
\begin{align*}
\nabla_{\nu}e_i
=-h_{ii}e_i+[|\nabla f|\partial_0,e_i]
=-h_{ii}e_i-e_i(|\nabla f|)\partial_0
=-h_{ii}e_i+\frac{e_i(R)}{2|\nabla f|}\partial_0.
\end{align*}
The last equality holds because of $R+|\nabla f|^2=C_0$ (see Lemma 1 (d)), and it implies $e_2(R)=-2|\nabla f|e_2(|\nabla f|)$.
So we have
\begin{align*}
Ric(\nabla_{\nu}e_i,e_i)
&=Ric\left(-h_{ii}e_i+\frac{e_i(R)}{2|\nabla f|}\partial_0,e_i\right)
=-h_{ii}R_{ii}+\frac{e_i(R)}{2|\nabla f|}R_{0i} \\
&=-h_{ii}R_{ii}+\frac{(e_i(R))^2}{4|\nabla f|^3},
\end{align*}
where we use the property $e_i(R)=\partial_iR=2Ric(\nabla f,\partial_i)=2Ric(|\nabla f|^2\partial_0,\partial_i)=2|\nabla f|^2R_{0i}$.
Thus
\begin{align*}
-Ric(\nabla_{\nu}e_2,e_2)+Ric(\nabla_{\nu}e_1,e_1)
&=h_{22}R_{22}-\frac{(e_2(R))^2}{4|\nabla f|^3}
-h_{11}R_{11}+\frac{(e_1(R))^2}{4|\nabla f|^3}.
\end{align*}
We introduce a tensor $L(\cdot,\cdot)=2\mathrm{d}R\otimes\mathrm{d}R-|\nabla R|^2g$
such that $L_{22}=L(e_2,e_2)=(e_2(R))^2-(e_1(R))^2$, and it gives
\begin{align*}
-Ric(\nabla_{\nu}e_2,e_2)+Ric(\nabla_{\nu}e_1,e_1)
=h_{22}R_{22}-h_{11}R_{11}-\frac{4L_{22}}{|\nabla f|^{3}}
=HS|\nabla f|-\frac{4L_{22}}{|\nabla f|^{3}}.
\end{align*}
So equation (\ref{Equation001}) becomes
\begin{align}
&\quad\ 2h^{ij}\left(\nabla_j R_{\nu li}^{\ \ \ l}+\nabla_l R_{\nu ij}^{\ \ \ l}\right) \notag \\
&=2S\left(\nu(S|\nabla f|)+HS|\nabla f|-\frac{4L_{22}}{|\nabla f|^{3}}-S|\nabla f|^2\right) \notag \\
&=|\nabla f|^2(\partial_0S^2)+2S^2\left(|\nabla f|\partial_0(|\nabla f|)+H|\nabla f|-|\nabla f|^2\right)-\frac{8SL_{22}}{|\nabla f|^{3}} \notag \\
&=|\nabla f|^2(\partial_0S^2)+2S^2(-R_{\nu\nu}+H|\nabla f|-|\nabla f|^2)-\frac{8SL_{22}}{|\nabla f|^{3}}. \label{equation002}
\end{align}

On the other hand, since
\begin{align*}
\partial_0U_{\sigma}=\partial_0\left(\frac{S^2}{H^{2+\sigma}}\right)=\frac{\partial_0 S^2}{H^{2+\sigma}}-\frac{2+\sigma}{H}(\partial_0H)U_{\sigma},
\end{align*}
we have
\begin{align}
\frac{|\nabla f|^2}{H^{2+\sigma}}\partial_0 S^2
&=|\nabla f|^2\left(\partial_0 U_{\sigma}+\frac{2+\sigma}{H}(\partial_0H)U_{\sigma}\right) \notag \\
&=|\nabla f|^2(\partial_0U_{\sigma})+\frac{2+\sigma}{H}\langle \nabla H, \nabla f\rangle U_{\sigma}. \label{equation003}
\end{align}
We combine the results (\ref{equation002}) and (\ref{equation003}) to get
\begin{align*}
-\frac{1}{H^{2+\sigma}}B
&=-\frac{4}{H^{2+\sigma}}R_{1212}S^2+\frac{2}{H^{2+\sigma}}h^{ij}\left(\nabla_j R_{\nu li}^{\ \ \ l}+\nabla_l R_{\nu ij}^{\ \ \ l}\right) \\
&=-\frac{4}{H^{2+\sigma}}R_{1212}S^2+|\nabla f|^2(\partial_0U_\sigma)+\frac{2+\sigma}{H}\langle\nabla H,\nabla f\rangle U_{\sigma} \\
&\quad+\frac{2}{H^{2+\sigma}}S^2(-R_{\nu\nu}+H|\nabla f|-|\nabla f|^2)-\frac{8SL_{22}}{H^{2+\sigma}|\nabla f|^{3}} \\
&=|\nabla f|^2(\partial_0U_{\sigma})-\left(4R_{1212}-\frac{2+\sigma}{H}\langle\nabla H,\nabla f\rangle+2R_{\nu\nu}-2H|\nabla f|+2|\nabla f|^2\right)U_{\sigma} \\
&\quad-\frac{8L_{22}}{H^{\frac{2+\sigma}{2}}|\nabla f|^{3}}\sqrt{U_\sigma} \\
&=|\nabla f|^2\frac{\partial}{\partial t}U_{\sigma}
-\left(2C_0-\frac{2+\sigma}{H}\langle\nabla H,\nabla f\rangle-2H|\nabla f|\right)U_{\sigma}
-\frac{8L_{22}}{H^{\frac{2+\sigma}{2}}|\nabla f|^{3}}\sqrt{U_\sigma}.
\end{align*}
\end{proof}

Next Lemma will rewrite the term $D$ in Proposition \ref{prop2} in terms of $\sqrt{U_{\sigma}}$.
\begin{Lma} \label{D}
Suppose that $p$ is a non-umbilical point of $\Sigma_t$, then in a neighborhood of $p\in M^3$, we have
\begin{align*}
D=\frac{H^{\frac{2+\sigma}{2}}}{2}\left(H(\lambda_{,22}-\lambda_{,11})+2(H_{,2}\lambda_{,2}-H_{,1}\lambda_{,1})\right)\sqrt{U_\sigma},
\end{align*}
where $_{,2}$ (resp. $_{,1}$) denotes the covariant differentiation with respect to the unit eigenvector of
$h_{ij}$ which corresponds to the larger (resp. smaller) eigenvalue.
\end{Lma}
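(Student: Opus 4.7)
The plan is to carry out a direct algebraic simplification of $D$ in the same subnormal coordinate system $\{x^0=f,x^1,x^2\}$ that was set up for Lemma \ref{B}. The advantage of those coordinates at $p\in\Sigma_t$ is that the induced metric $g_{ij}$ on the level set is the identity, so $h^{ij}$ coincides with $h_{ij}$, and $h_{ij}$ is diagonalized with $h_{11}<h_{22}$ (the sign convention inherited from Lemma \ref{B}).

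First I would expand each piece of
\begin{align*}
D=h^{ij}\bigl(H\lambda_{,ij}+2H_{,i}\lambda_{,j}\bigr)-\tfrac{1}{2}H\bigl(H\Delta\lambda+2\langle\nabla H,\nabla\lambda\rangle\bigr)
\end{align*}
using the diagonal form of $h^{ij}$ at $p$. This produces only four nonzero monomials in $\lambda_{,ii}$ and $H_{,i}\lambda_{,i}$ ($i=1,2$). Regrouping them by these monomials reveals a common coefficient of the form $h_{ii}-H/2$ in front of each $H\lambda_{,ii}$ term, and the same coefficient in front of each $2H_{,i}\lambda_{,i}$ term. Since $H=h_{11}+h_{22}$, one has $h_{11}-H/2=-S/2$ and $h_{22}-H/2=S/2$, where $S:=h_{22}-h_{11}$.

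Pulling out the common factor $S/2$ then collapses the expression to
\begin{align*}
D=\tfrac{S}{2}\bigl(H(\lambda_{,22}-\lambda_{,11})+2(H_{,2}\lambda_{,2}-H_{,1}\lambda_{,1})\bigr).
\end{align*}
To finish, I would convert $S$ to $\sqrt{U_\sigma}$ via the two-dimensional identity $S^2=(h_{22}-h_{11})^2=2|A|^2-H^2$, which together with the convention $h_{22}\geq h_{11}$ (and $H>0$ near the non-umbilical point $p$, by positivity of sectional curvature) gives $S=H^{(2+\sigma)/2}\sqrt{U_\sigma}$, yielding exactly the stated formula.

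There is no real obstacle here; the computation is routine linear algebra once one commits to the subnormal frame. The two points that need care are (i) verifying that the identification $h^{ij}=h_{ij}$ at $p$ is legitimate, which is guaranteed by the normalization $g_{ij}(p)=\delta_{ij}$ built into the subnormal coordinates, and (ii) keeping the sign of $S$ compatible with the choice $h_{22}>h_{11}$ made in Lemma \ref{B}, so that the square-root conversion $S=H^{(2+\sigma)/2}\sqrt{U_\sigma}$ introduces no sign ambiguity in the final expression.
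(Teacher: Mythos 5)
Your proposal is correct and follows essentially the same route as the paper: a direct expansion of $D$ in the subnormal coordinates of Lemma~\ref{B}, regrouping the four monomials via $h_{ii}-H/2=\pm S/2$ to extract the factor $S/2$, and then converting $S=h_{22}-h_{11}$ to $H^{(2+\sigma)/2}\sqrt{U_\sigma}$ using $S^2=2|A|^2-H^2$. The two points of care you flag (the normalization $g_{ij}(p)=\delta_{ij}$ and the sign convention $h_{22}>h_{11}$) are exactly the conventions the paper sets up in Lemma~\ref{B}, so nothing is missing.
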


\begin{proof}
Using the subnormal coordinates around a given point $p$ as we have done in the proof of Lemma \ref{B}, one can compute
\begin{align*}
D&=h^{ij}(H\lambda_{,ij}+2H_{,j}\lambda_{,i})-\frac{1}{2}(H^2\Delta\lambda+2H\langle\nabla\lambda,\nabla H\rangle)\\
 &=H\left(h_{11}\lambda_{,11}+h_{22}\lambda_{,22}  -\frac{H}{2}( \lambda_{,11} +\lambda_{,22} )\right)\\
 &\quad+2(h_{11}H_{,1}\lambda_{,1}+h_{22}H_{,2}\lambda_{,2})-H(\lambda_{,1}H_{,1}+\lambda_{,2}H_{,2})\\
 &=\frac{H}{2}\left(\lambda_{,11}(h_{11}-h_{22})+\lambda_{,22}(h_{22}-h_{11})\right)+H_{,1}\lambda_{,1}(h_{11}-h_{22})  + H_{,2}\lambda_{,2}(h_{22}-h_{11})\\
 &=\frac{S}{2}\left(H(\lambda_{,22}-\lambda_{,11})+2(H_{,2}\lambda_{,2}-H_{,1}\lambda_{,1})\right).
\end{align*}
Then  the conclusion follows from the relation $U_\sigma=\frac{2|A|^2-H^2}{H^{2+\sigma}}=\frac{S^2}{H^{2+\sigma}}$.
\end{proof}

Combining Proposition~\ref{prop2}, Lemma~\ref{B}, and Lemma~\ref{D}, one obtains
\begin{Prop}\label{prop3}
Under the level set flow (\ref{LSFeqn}),
the umbilical ratio satisfies the following evolution equation:
\begin{align*}
\frac{\partial}{\partial t}U_\sigma
=&\ \lambda\left(\Delta U_{\sigma}+\frac{2(1+\sigma)}{H}\langle\nabla H,\nabla U_{\sigma}\rangle
 -\frac{2}{H^{4+\sigma}}|\nabla_iHh_{jk}-H\nabla_ih_{jk}|^2\right) \\
&+\lambda\left(\frac{\sigma(1+\sigma)}{H^2}|\nabla H|^2-\sigma(|A|^2+R_{\nu\nu})\right)U_\sigma+\lambda|\nabla f|^2\frac{\partial}{\partial t}U_{\sigma}\\
&-\lambda\left(2C_0-\frac{2+\sigma}{H}\langle\nabla H,\nabla f\rangle-2H|\nabla f|\right)U_{\sigma}
-\frac{8\lambda L_{22}}{H^{\frac{2+\sigma}{2}}|\nabla f|^{3}}\sqrt{U_\sigma} \\
&-(2+\sigma)\left(\Delta\lambda+\frac{2}{H}\langle\nabla H,\nabla\lambda\rangle\right)U_{\sigma} \\
&+\left(\frac{1}{H^{\frac{\sigma}{2}}}(\lambda_{,22}-\lambda_{,11})+\frac{2}{H^{\frac{2+\sigma}{2}}}(H_{,2}\lambda_{,2}-H_{,1}\lambda_{,1})\right)\sqrt{U_\sigma},
\end{align*}
where $_{,2}$ (resp. $_{,1}$) denotes the covariant differentiation with respect to the unit eigenvector of $h_{ij}$
which corresponds to the larger (resp. smaller) eigenvalue.
\end{Prop}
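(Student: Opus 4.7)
The plan is to prove Proposition~\ref{prop3} by direct algebraic substitution of Lemma~\ref{B} and Lemma~\ref{D} into the evolution equation of Proposition~\ref{prop2}; no further geometric input is needed (the author's lead-in ``Combining Proposition~\ref{prop2}, Lemma~\ref{B}, and Lemma~\ref{D}, one obtains'' already signals this). Every term of Proposition~\ref{prop2} that does not involve $B$ or $D$---namely the parabolic core $\lambda(\Delta U_\sigma + \frac{2(1+\sigma)}{H}\langle\nabla H,\nabla U_\sigma\rangle - \frac{2}{H^{4+\sigma}}|\nabla_i H\,h_{jk}-H\nabla_i h_{jk}|^2)$, the $\sigma$-weighted coefficient $\lambda(\frac{\sigma(1+\sigma)}{H^2}|\nabla H|^2 - \sigma(|A|^2 + R_{\nu\nu}))U_\sigma$, and the term $-(2+\sigma)(\Delta\lambda + \frac{2}{H}\langle\nabla H, \nabla\lambda\rangle)U_\sigma$---appears verbatim in Proposition~\ref{prop3}, so the job reduces to two substitutions.

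First, I multiply the identity of Lemma~\ref{B} by $\lambda$ and insert it into the $-\frac{\lambda}{H^{2+\sigma}}B$ slot of Proposition~\ref{prop2}. This produces three new terms of Proposition~\ref{prop3}: the ``implicit'' piece $\lambda|\nabla f|^2\,\partial_t U_\sigma$ (left on the right-hand side rather than solved for, which is why Proposition~\ref{prop3} is stated as an equation with $\partial_t U_\sigma$ on both sides), the $U_\sigma$ contribution $-\lambda(2C_0 - \frac{2+\sigma}{H}\langle\nabla H,\nabla f\rangle - 2H|\nabla f|)U_\sigma$, and the $\sqrt{U_\sigma}$ contribution $-\frac{8\lambda L_{22}}{H^{(2+\sigma)/2}|\nabla f|^3}\sqrt{U_\sigma}$. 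All three appear in the target statement exactly as produced, so no further manipulation is required.

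Second, I substitute Lemma~\ref{D} into $\frac{2}{H^{2+\sigma}}D$: multiplication by $2/H^{2+\sigma}$ reduces Lemma~\ref{D}'s prefactor $H^{(2+\sigma)/2}/2$ to $H^{-(2+\sigma)/2}$, and distributing this over $H(\lambda_{,22}-\lambda_{,11}) + 2(H_{,2}\lambda_{,2}-H_{,1}\lambda_{,1})$ together with the identity $H^{-(2+\sigma)/2}\cdot H = H^{-\sigma/2}$ yields the closing line $\bigl(\frac{1}{H^{\sigma/2}}(\lambda_{,22}-\lambda_{,11}) + \frac{2}{H^{(2+\sigma)/2}}(H_{,2}\lambda_{,2}-H_{,1}\lambda_{,1})\bigr)\sqrt{U_\sigma}$ of Proposition~\ref{prop3}. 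Reassembling the two substitutions with the carried-over terms reproduces the full evolution equation. There is no genuine obstacle---the real content sits in Lemmas~\ref{B} and~\ref{D}, whose proofs do the work of rewriting $B$ and $D$ in terms of $U_\sigma$ and $\sqrt{U_\sigma}$. The only care points in the assembly are the bookkeeping of powers of $H$ in the second substitution and the preservation of the subnormal-coordinate labels ``${}_{,1}$'' and ``${}_{,2}$''; since Lemmas~\ref{B} and~\ref{D} are proved in a neighborhood of a non-umbilical point, the assembled identity of Proposition~\ref{prop3} is to be read as valid wherever $U_\sigma > 0$.
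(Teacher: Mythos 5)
Your proposal is correct and is exactly the paper's route: the paper offers no separate proof of Proposition~\ref{prop3} beyond the phrase ``Combining Proposition~\ref{prop2}, Lemma~\ref{B}, and Lemma~\ref{D}, one obtains,'' and your two substitutions (Lemma~\ref{B} into $-\frac{\lambda}{H^{2+\sigma}}B$, Lemma~\ref{D} into $\frac{2}{H^{2+\sigma}}D$ with the power count $H^{-\frac{2+\sigma}{2}}\cdot H=H^{-\frac{\sigma}{2}}$) reproduce the stated equation term by term. Your closing remark that the identity holds on the non-umbilical set, where $\sqrt{U_\sigma}$ is unambiguous, is the right caveat.
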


Now we are ready to show the following theorem, which leads to the Main Theorem.
A crucial observation is that the leading term of the coefficient of $U_\sigma$ is negative,
so we can apply the maximum principle to the evolution equation of $U_\sigma$ and show that $U_\sigma$ is bounded for some suitably chosen $\sigma$.

\begin{Thm} \label{theorem1}
Let $(M^3, g, f)$ be a nontrivial 3-dimensional steady gradient Ricci soliton.
If the scalar curvature $R$ satisfies
\begin{align*}
c_1 r^{-b}(x)\leq R(x)\leq c_2 r^{-a}(x)
\end{align*}
for some $a\in(0,1], b\geq a$, $c_1,c_2>0$, and $r(x):=dist(O,x)$ for some point $O\in M$,
then the soliton satisfies
\begin{align*}
\frac{2|A|^2-H^2}{H^2}\in O\left(r^{6a-\frac{8a^2}{b}}\right),
\end{align*}
where $|A|^2$ and $H$ are the norm square of the second fundamental form and the mean curvature of
the level sets of $f$, respectively.
\end{Thm}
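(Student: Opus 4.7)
The plan is to apply the maximum principle to the evolution equation in Proposition~\ref{prop3}, making use of the free parameter $\sigma$ and exploiting the negativity of the dominant coefficient of $U_\sigma$ at infinity. First I will simplify the weight $\lambda$: on tangential directions the steady soliton equation together with the Weingarten relation gives $f_{,ij}=-|\nabla f|h_{ij}$ and hence $R_{ij}=|\nabla f|h_{ij}$ for $i,j$ tangent to $\Sigma_t$. Tracing over the two-dimensional leaf yields $R_{11}+R_{22}=|\nabla f|H$, so
$$R_{\nu\nu}=R-|\nabla f|H,\qquad \lambda=\frac{1}{|\nabla f|H},\qquad \lambda|\nabla f|^2=\frac{|\nabla f|}{H}.$$
Since $|\nabla f|^2=C_0-R\to C_0$ and $H\to 0$ at infinity, $1-\lambda|\nabla f|^2$ is strictly negative in the end of $M$. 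Moving the $\lambda|\nabla f|^2\partial_t U_\sigma$ term to the left in Proposition~\ref{prop3} and setting $\tau=-t$ (so that $\tau\to+\infty$ at spatial infinity) converts the equation into a forward parabolic equation of the schematic form
$$\Bigl(\tfrac{|\nabla f|}{H}-1\Bigr)\partial_\tau U_\sigma=\lambda\Delta U_\sigma+(\text{gradient terms})+\mathcal A(\sigma)\,U_\sigma+\mathcal B(\sigma)\sqrt{U_\sigma}.$$

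Second, I will apply the maximum principle. Each leaf $\Sigma_\tau$ is topologically $S^2$ and compact (since $f$ is strictly concave with a unique critical point and therefore proper), so $\max_{\Sigma_\tau}U_\sigma$ is attained. At such a spatial maximum the Laplacian contribution is nonpositive, and the gradient-square term $-\tfrac{2}{H^{4+\sigma}}|\nabla_iHh_{jk}-H\nabla_ih_{jk}|^2$ is also nonpositive. Hamilton's trick together with a first-violation argument then gives, at a space-time maximum,
$$0\le\mathcal A(\sigma)U_\sigma+\mathcal B(\sigma)\sqrt{U_\sigma},$$
which, provided the dominant part of $\mathcal A$ is negative, yields $U_\sigma\le(\mathcal B(\sigma)/\mathcal A(\sigma))^2$. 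The leading part of $\mathcal A$ is the term $-\lambda\cdot 2C_0$ coming from the rewriting of $B$ in Lemma~\ref{B}; after dividing by $|\nabla f|/H-1\sim|\nabla f|/H$ it reduces to $-2C_0/|\nabla f|^2\to-2$, and the scheme works as long as the remaining contributions to $\mathcal A$ are of lower order.

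Third, I will estimate $\mathcal A$ and $\mathcal B$ in terms of $r$ using three ingredients. Positivity of sectional curvature in dimension three forces $K_{12}=R/2-R_{\nu\nu}>0$, hence $R_{\nu\nu}\le R/2$, and so $H\ge R/(2|\nabla f|)\gtrsim R\gtrsim r^{-b}$; this is where the lower bound $R\ge c_1r^{-b}$ enters, and it is what produces the $b$ in the denominator of $8a^2/b$. Next, the contracted Bianchi identity from Lemma~1(b) together with the three-dimensional bound $|\mathrm{Ric}|\lesssim R$ gives $|\nabla R|\lesssim R\lesssim r^{-a}$, whence $|L_{22}|\le|\nabla R|^2\lesssim r^{-2a}$. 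Finally, Shi-type estimates for $\nabla^2R$ combined with the identity $2|\nabla f|\nabla|\nabla f|=-\nabla R$ and the Bochner formula in Lemma~1(c) control $\nabla H$, $\Delta H$, $\nabla\lambda$ and $\Delta\lambda$. Plugging these bounds into $(\mathcal B/\mathcal A)^2$, choosing $\sigma$ to balance the factor $H^{-(4+\sigma)/2}$ appearing in the $L_{22}$ source from Lemma~\ref{B} against the factor $H^\sigma$ in the conversion $U_0=H^\sigma U_\sigma$, and substituting the power-of-$r$ bounds above then yields the claimed exponent $6a-8a^2/b$.

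The principal obstacle is controlling the $\lambda$-derivative contributions to $\mathcal A$, namely $-(2+\sigma)(\Delta\lambda+\tfrac{2}{H}\langle\nabla H,\nabla\lambda\rangle)U_\sigma$ and $\tfrac{(2+\sigma)}{H}\langle\nabla H,\nabla f\rangle U_\sigma$: because $\lambda=(|\nabla f|H)^{-1}$ blows up as $H\to 0$, its first and second derivatives are a priori large, and a careless estimate would reverse the sign of $\mathcal A$ in the end. The delicate part of the proof is therefore to show that, after dividing by $|\nabla f|/H-1$, all of these perturbations are strictly lower order in $r$ than the constant $-2$ produced by $-\lambda\cdot 2C_0$, so that $\mathcal A<0$ is preserved in the asymptotic region. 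A secondary technical point is that the source coefficients from Lemmas~\ref{B} and~\ref{D} live in the frame-dependent subnormal coordinates via $L_{22}$, $\lambda_{,22}-\lambda_{,11}$, and $H_{,2}\lambda_{,2}-H_{,1}\lambda_{,1}$; to apply the scalar maximum principle to $U_\sigma$ they have to be bounded by the frame-invariant quantities $|\nabla R|^2$, $|\Delta\lambda|$, and $|\nabla H||\nabla\lambda|$.
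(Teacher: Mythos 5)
Your overall strategy is the paper's: rewrite the equation of Proposition~\ref{prop3} with the $\lambda|\nabla f|^2\partial_t U_\sigma$ term absorbed into the left side, note that $\Theta\to 1/C_0$ and that the coefficient of $U_\sigma$ is asymptotically $-2\Theta C_0<0$, and then run the maximum principle so that $U_\sigma$ is controlled by the coefficient of $\sqrt{U_\sigma}$. Your preliminary identities ($R-R_{\nu\nu}=|\nabla f|H$, hence $\lambda=(|\nabla f|H)^{-1}$ and $H\geq R/(2|\nabla f|)\gtrsim r^{-b}$ from positive sectional curvature) are also exactly what the paper uses implicitly. However, there is a genuine gap in the step that actually produces the exponent: the choice of $\sigma$. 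You propose to fix $\sigma$ by ``balancing the factor $H^{-(4+\sigma)/2}$ appearing in the $L_{22}$ source against the factor $H^\sigma$ in the conversion $U_0=H^\sigma U_\sigma$.'' Two things go wrong here. First, the $L_{22}$ source in Lemma~\ref{B} carries $H^{-(2+\sigma)/2}$, so the combination $H^\sigma\bigl(L_{22}H^{-(2+\sigma)/2}\bigr)^2=L_{22}^2H^{-2}$ is independent of $\sigma$ --- this balancing determines nothing, and by itself it would give a bound of the form $O(r^{2b-4a})$ (or $O(r^{2b-6a})$ with the sharper Shi estimate $L_{22}\in O(r^{-3a})$, which you replace by the weaker $|\nabla R|^2\lesssim r^{-2a}$), not $O(r^{6a-8a^2/b})$. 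Second, and more importantly, the $L_{22}$ term is not the dominant source: the paper shows that among the three $\sqrt{U_\sigma}$-coefficients,
\begin{align*}
{\rm I}\in O\bigl(r^{\frac{2+\sigma}{2}b-3a}\bigr),\qquad {\rm I\!I},\,{\rm I\!I\!I}\in O\bigl(r^{\frac{6+\sigma}{2}b-4a}\bigr),
\end{align*}
the $\lambda$-derivative terms ${\rm I\!I}$ and ${\rm I\!I\!I}$ coming from Lemma~\ref{D} and from $\Delta\lambda$ exceed ${\rm I}$ by a factor $r^{2b-a}$. The exponent $6a-8a^2/b$ arises precisely by choosing $\sigma=\frac{8a}{b}-6$ so that ${\rm I\!I},{\rm I\!I\!I}\in O(1)$; only then does the ODE comparison $\frac{d}{d\tau}\phi\leq -\phi+C\sqrt{\phi}$ close to give $U_\sigma$ \emph{bounded}, after which $U_0=H^\sigma U_\sigma\in O(r^{-a\sigma})$. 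Your plan treats these $\lambda$-derivative terms either as perturbations of the zeroth-order coefficient $\mathcal A$ to be shown negligible, or as a ``secondary technical point,'' whereas they are the terms that dictate both the admissible $\sigma$ and the final decay rate; without the estimates $|\nabla\lambda|\in O(r^{2b-\frac32 a})$, $|\nabla^2\lambda|\in O(r^{3b-3a})$ and the resulting choice $\sigma=\frac{8a}{b}-6$, the claimed exponent does not come out of your recipe.
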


\begin{proof}[Proof of Theorem~\ref{theorem1}]
As one can see in Proposition \ref{prop3}, there naturally arises a time-derivative of $U_\sigma$,
namely the term $\lambda|\nabla f|^2\frac{\partial}{\partial t}U_\sigma$,
in the right hand side of the evolution equation of $U_\sigma$. Moving it to the left hand side and denoting $\tau=-t$,
and $\Theta=\frac{\lambda}{\lambda|\nabla f|^2-1}$, we get
\begin{align*}
\frac{\partial}{\partial \tau}U_\sigma
=&\ \Theta\left(\Delta U_{\sigma}+\frac{2(1+\sigma)}{H}\langle\nabla H,\nabla U_{\sigma}\rangle
 -\frac{2}{H^{4+\sigma}}|\nabla_iHh_{jk}-H\nabla_ih_{jk}|^2\right) \\
&+\Theta\left(\sigma(1+\sigma)\frac{|\nabla H|^2}{H^2}-\sigma(|A|^2+R_{\nu\nu})\right)U_\sigma\\
&-\Theta\left(2C_0-\frac{2+\sigma}{H}\langle\nabla H,\nabla f\rangle-2H|\nabla f|\right)U_{\sigma}
-\frac{8\Theta L_{22}}{H^{\frac{2+\sigma}{2}}|\nabla f|^{3}}\sqrt{U_\sigma}\\
&- \frac{2+\sigma}{\lambda|\nabla f|^2-1}\left(\Delta\lambda+\frac{2}{H}\langle\nabla H,\nabla\lambda\rangle\right)U_{\sigma} \\
&+ \frac{1}{\lambda|\nabla f|^2-1}\left(\frac{1}{H^{\frac{\sigma}{2}}}(\lambda_{,22}-\lambda_{,11})
+\frac{2}{H^{\frac{2+\sigma}{2}}}(H_{,2}\lambda_{,2}-H_{,1}\lambda_{,1})\right)\sqrt{U_\sigma},
\end{align*}

Recall that the value of $f$ goes to $-\infty$ as $x\to\infty$.
So this equation reveals the asymptotic shape of level sets of $f$ when $\tau$ goes to $\infty$.
Besides, the coefficient of $\Delta U_\sigma$, $\Theta = \frac{1}{|\nabla f|^2-(R-R_{\nu\nu})}$, approaches $\frac{1}{C_0}$ as $\tau\to\infty$,
so the parabolic equation is non-degenerate.
Furthermore, the coefficient of $U_\sigma$ is featured by the term $-2\Theta C_0$ because all other terms fade out as $\tau\to\infty$.
This suggests that $U_\sigma$ is a decreasing quantity and its decay rate is determined by the coefficient of $\sqrt{U_\sigma}$, which consists of
\begin{align*}
{\rm I}:=-\frac{8\Theta L_{22}}{ H^{\frac{2+\sigma}{2}}|\nabla f|^{3}},
\hspace{3mm}{\rm I\!I}:=\frac{\lambda_{,22}-\lambda_{,11}}{(\lambda|\nabla f|^2-1)H^{\frac{\sigma}{2}}},
\ \mbox{ and }\ {\rm I\!I\!I}:=\frac{2(H_{,2}\lambda_{,2}-H_{,1}\lambda_{,1})}{(\lambda|\nabla f|^2-1)H^{\frac{2+\sigma}{2}}}.
\end{align*}
Since $c_1 r^{-b}(x)\leq R(x)\leq c_2 r^{-a}(x)$,
by Shi's estimate, we know that $L_{22}\in O(r^{-3a})$,
$|\nabla \lambda| \in O(r^{2b-\frac{3}{2}a})$, and $\left|\nabla^2\lambda\right| \in O(r^{3b-3a})$.
Thus,
\begin{align*}
{\rm I} \in O(r^{\frac{2+\sigma}{2}b -3a}), \ \ {\rm I\!I} \in O(r^{\frac{6+\sigma}{2}b-4a}) \
\mbox{ and }\ {\rm I\!I\!I} \in O(r^{\frac{6+\sigma}{2}b -4a}).
\end{align*}
Since $b\geq a$, it is easy to see that ${\rm I\!I}$ and ${\rm I\!I\!I}$ are the highest order terms.
By choosing $\sigma=\frac{8a}{b}-6$, we have ${\rm I\!I\!I}\in O(1)$.
Therefore,
\begin{align*}
\frac{\partial}{\partial \tau} U_\sigma
\leq\frac{1}{C_0}\left(\Delta U_{\sigma}+\frac{2(1+\sigma)}{H}\langle\nabla H,\nabla U_{\sigma}\rangle\right)-U_{\sigma}+C\sqrt{U_\sigma}
\end{align*}
By the maximum principle, $U_\sigma$ is bounded when $\tau\to\infty$ and thus
\begin{align*}
\frac{2|A|^2-H^2}{H^2}\in O(r^{-a\sigma})=O\left(r^{6a-\frac{8a^2}{b}}\right).
\end{align*}
\end{proof}

\begin{proof}[Proof of Main Theorem]
Let $a\in(0,1]$. Main Theorem is just a combination of Theorem~\ref{theorem1} and the
following fact derived from Shi's estimate:
\begin{align*}
U_0=\frac{(R_{22}-R_{11})^2}{4|\nabla f|^2H^2}=\frac{(R_{2\nu 2\nu}-R_{1\nu 1\nu})^2}{4|\nabla f|^2H^2}\in O(r^{2b-4a}),
\end{align*}
since $|R_{j\nu j\nu} |\leq |R_{\nu\nu}|\in O(r^{-2a})$.
\end{proof}

\bibliography{refer_article}
\bibliographystyle{alpha}
\end{document}